\newtheorem{theorem}{Theorem}[section]
\newtheorem{corollary}[theorem]{Corollary}
\newtheorem{proposition}[theorem]{Proposition}
\newtheorem{definition}[theorem]{Definition} \theoremstyle{remark}
\newtheorem{remark}[theorem]{\bf Remark} \theoremstyle{definition}
\newtheorem{example}[theorem]{\bf Example}
\newtheorem{conjecture}[theorem]{Conjecture}
\numberwithin{equation}{section}
\DeclareMathOperator{\ann}{Ann} \DeclareMathOperator{\inn}{in}
\DeclareMathOperator{\HF}{HF} 
\author{Mats Boij} 
\address{Mats Boij, Department of Mathematics, KTH -- Royal Institute
  of Technology, SE-100 44 Stockholm, Sweden} \email{boij@kth.se}
\author{Luís Duarte}
\address{Luís Duarte, Department of Mathematics, Stockholm University,
  SE-106 91 Stockholm, Sweden} \email{luis.duarte@math.su.se}
\author{Samuel Lundqvist}
\address{Samuel Lundqvist, Department of Mathematics, Stockholm
  University, SE-106 91 Stockholm, Sweden} \email{samuel@math.su.se}
\title[On the initial ideal of a generic artinian Gorenstein
algebra]{On the initial ideal of a generic artinian Gorenstein
  algebra}
\begin{document}

\begin{abstract}
In this note we show that the initial ideal of the annihilator ideal
of a generic form is generated by the largest possible monomials in
each degree.
We also show that the initial ideal with respect to the degree reverse
lexicographical ordering of the annihilator ideal of the complete
symmetric form has this property, by determining a minimal Gr\"obner
basis of it.
Moreover, we determine the total Betti numbers for a class of strongly
stable monomial ideals and show that these numbers agree with those
for the degree reverse lexicographical initial ideals of the ideal
generated by a sufficiently large number of generic forms, and of the
annihilator ideal of a generic form. 
\end{abstract}

\maketitle

\section{Introduction}
Let $R=k[x_1,\ldots,x_n]$ be a standard graded polynomial ring. degree
reverse lexicographical ordering.  Moreno-Soc\'ias
\cite{MORENOSOCIAS2003263} conjectured that the initial ideal $I$ of
an ideal generated by $m$ generic forms is \emph{weakly reverse
  lexicographical}, which means that if $x^{\mu}\in I$ is a minimal
monomial generator of $I$, then every monomial of the same degree as
$x^{\mu}$ that preceeds $x^{\mu}$ with respect to the degree reverse
lexicographical ordering is also in $I$. Pardue \cite{PARDUE2010579}
showed that Moreno-Soc\'ias' conjecture for the special case $m = n$
implies the longstanding Fr\"oberg conjecture \cite{Froberg} on the
Hilbert series of ideals generated by any number of generic forms. We
also recall that a monomial ideal $I$ is
$\emph{reverse lexicographic}$ if, for every $d\ge 0$, $I_d$ is
generated by the largest possible monomials of degree $d$ with respect
to the reverse lexicographic ordering.

Let $S=k[X_1,\ldots,X_n]$. We view $S$ (the inverse system of $R$) as
an $R$-module, with the action of $R$ on $S$ being given by
$x_i\circ g=\frac{\partial}{\partial x_i}g$ for every $g\in S$ and
$i=1,\ldots,n$. If $A$ is a subset of $S$ we set
$$\ann(A)=\{f\in R:f\circ g=0\text{ for all }g\in A\}.$$ In the case
of positive characteristic, we have to replace the
\emph{differentiation action} of $S$ on $R$ defined above by the
\emph{contraction action}, given by
\[
  x_i\circ X_1^{a_1}X_2^{a_2}\cdots X_n^{a_n} =\begin{cases}
    X_1^{a_1}X_2^{a_2}\cdots X_i^{a_i-1}\cdots X_n^{a_n}&\text{ if
      $a_i \geq 1$}, \\ 0 &\text{otherwise}.\end{cases}
\]

Let $G$ be a generic form of degree $e$ in $S$. Then the Gorenstein
algebra $R/\ann(G)$ is compressed \cite{iarrobino1999power}, i.e., the
Hilbert series is equal to

$$\sum_{i=0}^{\lfloor e/2 \rfloor} \dim_k R_i t^i + \sum_{i=\lfloor e/2 \rfloor + 1}^{e} \dim_k R_{e-i} t^{i}.$$

Much is known about the generators of $\ann(G)$. If $e$ is even, then
$\ann(G)$ is generated in degree $e/2+1$ (this in fact holds for every
compressed Artinian algebra of even socle degree $e$ \cite[Example
4.7]{iarrobino1984compressed}). For $e$ odd, $\ann(G)$ is generated in
degree $\lfloor e/2 \rfloor +1$ if $n\ge 4$ or if $n=3$ and
$\lfloor e/2 \rfloor$ is odd (\cite[Corollary 4.5]{boij2023rate}). In
the case where $n=3$, $e$ is odd and $\lfloor e/2 \rfloor$ is even,
then $\ann(G)$ has generators in degree $\lfloor e/2 \rfloor +1$ and
at least one generator in degree $\lfloor e/2 \rfloor +2$
(\cite[Example 3.16]{BOIJ1999111}).

In our main result of this manuscript, we focus on determining the
initial ideal of $\ann(G)$ when $G\in S$ is a generic form of degree
$e$. For any monomial order on $R$ we show that $\inn(\ann(G))$ is
generated by the largest possible monomials in each degree. If we
consider the degree reverse lexicographic order, then this shows that
the ideal is not only weakly reverse lexicographic, but is in fact
reverse lexicographic, thus providing a connection to the currently
open Moreno-Soc\'ias' conjecture.

Let $H_e$ be the complete symmetric homogeneous form of degree $e$ in
$S$. The first author, Migliore, Mir\'o-Roig, and Nagel
\cite{boij2022waring} showed that $R/\ann(G)$ and $R/\ann(H_e)$ have
the same Hilbert series, so that in particular, $R/\ann(H_e)$ is also
compressed. This was achieved by determining the generators for
$\ann(H_e)$. They also showed that $R/\ann(H_e)$ has the strong
Lefschetz property by showing that the sum of the variables serves as
a strong Lefschetz element. Here, we extend this result by determining
a reduced Gr\"obner basis of $\ann(H_e)$ with respect to the degree
reverse lexicographical ordering. As a consequence, we obtain that
$\inn(\ann(H_e))=\inn(\ann(G))$, from which we get that the initial
ideal of $\ann(H_e)$ is reverse lexicographical and that $x_n$ serves
as a strong Lefschetz element.

Let $d=\lfloor e/2 \rfloor + 1$. The ideal $\ann(G)$ has at least
$\dim_k R_d - \dim_k R_{d-1}$ minimal generators of degree
$d$. Consider the ideal $I$ generated by at least as many general
forms of degree $d$, and possibly more of degrees $> d$. We provide a
connection between the initial ideal of $I$ and the initial ideal of
$\ann(G)$, with respect to the degree reverse lexicographical
ordering, by showing that they have the same total Betti numbers. This
relationship is unexpected since $R/I$ and $R/\ann(G)$ are not even
isomorphic as vector spaces.

The paper is organized as follows.  In Section \ref{sec:gen} we
determine the initial ideal of the annihilator ideal of a generic
form.  In Section \ref{sec:gb} we determine the Gr\"obner basis of the
annihilator ideal of the complete symmetric homogeneous form.  In
Section \ref{sec:betti} we determine homological invariants of a class
of strongly stable monomial ideals, of the initial ideal of the
annihilator ideal of a generic form, and of the initial ideal of the
ideal generated by sufficiently many generic forms.

\section{The generic case} \label{sec:gen}

\begin{theorem}\label{GenericIdealReverseLex}
  Let $G\in S$ be a generic homogeneous form of degree $e$ and let
  $\prec$ be any monomial order. Then for $d\in[e/2,e]$
  \[
    [\inn(\ann(G))]_d = \langle \text{first
      $\dim_k R_d-\dim_k R_{e-d}$ monomials of degree $d$}\rangle.
  \]
\end{theorem}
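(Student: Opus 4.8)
The plan is to reduce the statement to a single linear-algebra fact about the apolarity pairing and then to a genericity argument. Fix $d\in[e/2,e]$ and list the monomials of degree $d$ as $m_1\succ m_2\succ\cdots\succ m_N$ with $N=\dim_k R_d$. Consider the catalecticant (apolarity) map
\[
  \phi\colon R_d\to S_{e-d},\qquad \phi(f)=f\circ G,
\]
whose kernel is $\ann(G)_d$. Since $R/\ann(G)$ is compressed and $d\ge e/2$, the map $\phi$ is surjective and $\dim_k\ann(G)_d=\dim_k R_d-\dim_k R_{e-d}=:r$; as initial ideals preserve Hilbert functions, this is also $\dim_k[\inn(\ann(G))]_d$. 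I would then invoke the elementary fact that a monomial $m_i$ lies in $\inn(\ann(G))$ if and only if it is the leading term of a kernel element, i.e. if and only if $\phi(m_i)\in\mathrm{span}\{\phi(m_j):j>i\}$.

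Processing the $m_j$ from smallest to largest gives the usual column-matroid description: the standard monomials (the complement of the initial ideal) are exactly those $m_i$ with $\phi(m_i)$ independent of $\{\phi(m_j):j>i\}$. Hence $[\inn(\ann(G))]_d=\langle m_1,\dots,m_r\rangle$ precisely when the $\dim_k R_{e-d}$ smallest monomials $m_{r+1},\dots,m_N$ map under $\phi$ to a basis of $S_{e-d}$: if their images are independent, each is standard, while for every larger index $i\le r$ one has $\phi(m_i)\in S_{e-d}=\mathrm{span}\{\phi(m_j):j>r\}\subseteq\mathrm{span}\{\phi(m_j):j>i\}$, so $m_i\in\inn(\ann(G))$. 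Thus the theorem reduces to the claim that, for generic $G$, the $\dim_k R_{e-d}$ $\prec$-smallest degree-$d$ monomials have linearly independent images under $\phi$.

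To finish, write $G=\sum_{|\gamma|=e}c_\gamma X^\gamma$. In the monomial bases the matrix of $\phi$ is the catalecticant $C$ with rows indexed by $\beta$ ($|\beta|=e-d$), columns by $\mu$ ($|\mu|=d$), and entry $C_{\beta,\mu}=c_{\mu+\beta}$ (up to a nonzero scalar coming from the differentiation action, and exactly $c_{\mu+\beta}$ under contraction). Independence of the chosen columns is the non-vanishing of the maximal minor $\det C_T$ for $T=\{m_{r+1},\dots,m_N\}$, a Zariski-open condition in the $c_\gamma$. Since only finitely many such $T$ occur (one per degree $d\in[e/2,e]$ for the fixed order $\prec$, and even over all orders only finitely many orderings of each finite degree-piece arise), it suffices to prove that each $\det C_T$ is a nonzero polynomial in the $c_\gamma$, verifying this by any convenient specialization of $G$ for each $T$ separately; their common non-vanishing then holds on a dense open set and yields a single generic $G$ valid for all $d$ (and, in fact, for all orders simultaneously).

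The main obstacle is exactly this combinatorial non-vanishing of $\det C_T=\sum_\sigma\mathrm{sgn}(\sigma)\prod_\beta c_{\beta+\sigma(\beta)}$, where $\sigma$ ranges over the bijections from $\{\beta:|\beta|=e-d\}$ to $T$. I would attack it by ordering the coefficient variables $c_\gamma$ via $\prec$ on the index $\gamma$ and exhibiting one matching $\sigma$ whose monomial $\prod_\beta c_{\beta+\sigma(\beta)}$ is strictly extremal for the induced order — equivalently, a system of distinct representatives realizing a multiset of sums $\{\beta+\sigma(\beta)\}$ that no competing matching realizes, so that its term survives and cannot cancel. The delicate point is the \emph{uniqueness} of such an extremal matching, and this is precisely where the ``final segment'' shape of $T$ (it is the set of $\prec$-smallest monomials) must be exploited; establishing this uniqueness is the step I expect to demand the real work.
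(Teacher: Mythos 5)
Your reduction is correct and follows the same route as the paper: both arguments come down to showing that the maximal minor of the catalecticant matrix $C$ whose columns are indexed by the $\dim_k R_{e-d}$ $\prec$-smallest monomials of degree $d$ is a nonzero polynomial in the coefficients of $G$. But you stop exactly there: you write that exhibiting a unique extremal matching ``is the step I expect to demand the real work,'' and you never produce it. Since everything preceding that point (the kernel description of $\ann(G)_d$, the column-matroid characterization of standard monomials, the Zariski-open/finitely-many-conditions argument) is routine linear algebra, the unproven non-vanishing of $\det C_T$ \emph{is} the theorem; as submitted, the proposal has a genuine gap at its only substantive step.

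The gap is closed by an observation simpler than what you anticipate, and your guess that the ``final segment'' shape of $T$ must be exploited is a red herring. Order the rows $\mu_1\succ\cdots\succ\mu_k$ (indices in $\mathcal M_{e-d}$) and the columns $\nu_1\succ\cdots\succ\nu_k$ of the chosen square submatrix by $\prec$. Because any monomial order is compatible with multiplication, $\mu\succ\mu'$ implies $\mu\nu\succ\mu'\nu$, so the index $\mu_i\nu_j$ of the $(i,j)$ entry is strictly decreasing along rows and along columns; in particular $\mu_1\nu_1\succ\mu_i\nu_j$ for all $(i,j)\neq(1,1)$, so the variable $c_{\mu_1\nu_1}$ occurs in the submatrix only at position $(1,1)$, and induction on $k$ shows the diagonal product $\prod_i c_{\mu_i\nu_i}$ is a squarefree monomial occurring in no other term of the determinant expansion. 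Hence \emph{every} maximal minor of $C$ is a nonzero polynomial, for arbitrary subsets of rows and columns, not just for the terminal segment $T$. (This stronger fact is what lets the paper row-reduce $C$ to the form $\begin{bmatrix}B & I\end{bmatrix}$ and read off the kernel basis directly, but for your argument the single minor $\det C_T\neq 0$ suffices.) If you supply this uniqueness argument, your proof is complete and essentially identical to the paper's.
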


\begin{proof}
  We compute the degree $d$ part of the ideal $\ann (G)$ as the kernel
  of the catalecticant matrix $Cat^d_{e-d}(G)$ representing the
  pairing
  \[
    S_{d}\times S_{e-d} \longrightarrow k
  \]
  given by $(f,g)\mapsto (fg)\circ G$.  Let $\mathcal M_d$ be the
  monomial basis for $R_d$. We can write
  $G = \sum_{\mu\in\mathcal M_e} \frac{1}{\mu\circ \mu} a_\mu
  \mu$. Observe that $\mu\circ \mu = 1$ for all monomials $\mu$ if we
  are in characteristic $p$ using the contraction action. The
  catalecticant matrix for computing $\ann(G)$ in degree $d$ is given
  by
  \[
    \left( a_{\mu\nu}\right)_{\mu\in \mathcal M_{e-d},\nu\in \mathcal
      M_d}
  \]
  Let $S\subseteq \mathcal M_d$ and $T\subseteq \mathcal M_{e-d}$ be
  subsets of the same cardinality and consider the minor defined by
  them. If we order the rows and columns according to our monomial
  order, starting with the largest, we have that the indices of the
  entries of the catalecticant matrix is decreasing in rows and
  columns.  Hence, the product of the diagonal elements in the minor
  is a squarefree monomial in the coefficients $a_\mu$ that cannot
  occur in any other term of the minor. This shows that any minor is
  non-zero for a generic form. This means that we can row-reduce the
  catalecticant matrix to a form
  \[
    \begin{bmatrix}B & I \end{bmatrix}
  \]
  and the kernel is given by the columns of
  \[
    \begin{bmatrix}
      I\\-B
    \end{bmatrix}
  \]
  showing that the initial monomials of the basis of $[\ann(G)]_d$ are
  the first monomials in the monomial order that we have chosen.
\end{proof}

We now specify further properties of the ideal $\inn(\ann(G))$ when we
consider on $R$ the degree reverse lexicographic order induced by
$x_1 \succ \cdots \succ x_n$.

\begin{corollary}\label{LefschetzProperty}
  Let $G\in S$ ge a generic form of degree $e$. Consider on $R$ the
  degree reverse lexicographic order. Then $x_n$ is a strong Lefschetz
  element for $R/\inn(\ann(G))$.
\end{corollary}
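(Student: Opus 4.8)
The plan is to prove the statement directly on the monomial basis of $A:=R/\inn(\ann(G))$, using only that $\inn(\ann(G))$ is reverse lexicographic, which is exactly what Theorem~\ref{GenericIdealReverseLex} provides. Recall that $x_n$ is a strong Lefschetz element precisely when, for every $d$ and every $k\ge 0$, the multiplication map $\times x_n^k\colon A_d\to A_{d+k}$ has maximal rank, i.e.\ rank $\min(\dim_k A_d,\dim_k A_{d+k})$. Write $I=\inn(\ann(G))$ and let $B_d$ denote the standard monomials of degree $d$, that is, the monomial basis of $A_d$. Since $I$ is reverse lexicographic, $I_d$ consists of the largest monomials of degree $d$, so $B_d$ is exactly the set of the $h_d:=\dim_k A_d$ smallest monomials of degree $d$ in the degree reverse lexicographic order.

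First I would isolate the one combinatorial fact about the degree reverse lexicographic order that drives everything: for any $D$ and any $k$, the monomials of degree $D$ divisible by $x_n^k$ form a \emph{bottom segment}, namely the $\dim_k R_{D-k}$ smallest monomials of degree $D$. This follows from the defining comparison: if $x^\nu$ has $\nu_n\ge k$ and $x^\eta$ has $\eta_n<k$, then the last coordinate of $\nu-\eta$ equals $\nu_n-\eta_n>0$, so $x^\nu\prec x^\eta$. Thus every monomial divisible by $x_n^k$ is smaller than every monomial of the same degree that is not, and there are $\dim_k R_{D-k}$ of the former.

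Next I would track $\times x_n^k$ on monomials. It is injective and order preserving, and it carries the monomials of degree $d$ bijectively onto the monomials of degree $d+k$ divisible by $x_n^k$. An order preserving bijection sends the $h_d$ smallest monomials of degree $d$, namely $B_d$, onto the $h_d$ smallest monomials of the image set; since by the fact above that image set is itself the bottom segment of the $\dim_k R_d$ smallest monomials of degree $d+k$, the set $x_n^kB_d$ consists of the $h_d$ smallest monomials of degree $d+k$ overall. Comparing this with $B_{d+k}$, the set of the $h_{d+k}$ smallest monomials of degree $d+k$, two nested bottom segments appear: if $h_d\le h_{d+k}$ then $x_n^kB_d\subseteq B_{d+k}$, so every image is a nonzero, distinct basis monomial and $\times x_n^k$ is injective; if $h_d\ge h_{d+k}$ then $B_{d+k}\subseteq x_n^kB_d$, so every basis monomial of $A_{d+k}$ is hit and $\times x_n^k$ is surjective. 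In either case the rank is $\min(h_d,h_{d+k})$, which is maximal, and $x_n$ is a strong Lefschetz element.

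The only delicate point is the bookkeeping in the middle step: one must verify that $\times x_n^k$, viewed as an order preserving bijection onto the monomials of degree $d+k$ divisible by $x_n^k$, really does restrict to a bijection of bottom segments, and that $B_d$ coincides with the genuinely smallest $h_d$ monomials rather than merely a collection of the right cardinality — both of which are guaranteed by the reverse lexicographic conclusion of Theorem~\ref{GenericIdealReverseLex}. No symmetry or unimodality of the Hilbert function is needed; the argument is a purely formal statement about quotients by reverse lexicographic ideals.
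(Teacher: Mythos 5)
Your proof is correct, and it takes a somewhat more general route than the paper. The paper's proof exploits the fact that $R/\inn(\ann(G))$ is compressed: for $i<e/2$ one has $B_i=R_i$, and by Theorem~\ref{GenericIdealReverseLex} the monomial basis of $B_{e-i}$ is exactly the set of monomials of degree $e-i$ divisible by $x_n^{e-2i}$, so $\cdot x_n^{e-2i}\colon B_i\to B_{e-i}$ is visibly a bijection on monomial bases; only these extremal maps are checked. You instead prove the stronger statement that $\cdot x_n^k\colon A_d\to A_{d+k}$ has maximal rank for \emph{all} $d$ and $k$, using only that the ideal is reverse lexicographic in every degree: divisibility by $x_n^k$ cuts out a bottom segment in degrevlex, multiplication by $x_n^k$ is an order-preserving bijection onto that segment, and two bottom segments are always nested. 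The shared key observation is the same (monomials divisible by a power of $x_n$ are the smallest ones in degrevlex), but your argument never uses compressedness or the symmetry of the Hilbert function and so applies verbatim to any Artinian quotient by a reverse lexicographic monomial ideal; the paper's version is shorter because it reduces to a single bijection per degree. The only bookkeeping point worth making explicit is that Theorem~\ref{GenericIdealReverseLex} is stated for $d\in[e/2,e]$, so for $d<e/2$ and $d>e$ you should note that $B_d$ is all monomials, respectively empty, which are trivially bottom segments; with that said, your argument is complete.
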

\begin{proof}
  Let $J=\inn(\ann(G))$ and $B=R/J$. For $x_n$ to be a strong
  Lefschetz element for $B$ what is required is that
  $\cdot x_n^{e-2i}:B_i\to B_{e-i}$ is bijective for all $i<e/2$. But
  for such values of $i$, $B_i=R_i$ since $B$ is compressed. Also, by
  Theorem \ref{GenericIdealReverseLex}, a monomial generating set of
  $B_{e-i}$ consists of the smallest $\dim_k R_i$ monomials of degree
  $e-i$. These are precisely all monomials of degree $e-i$ that are
  divisible by $x_n^{e-2i}$. We conclude that
  $\cdot x_n^{e-2i}:B_i\to B_{e-i}$ is bijective for all $i<e/2$.
\end{proof}

For each $i\ge 0$ we will denote by $\mathcal A_i$ the monomial basis
of $k[x_1,\ldots,x_{n-1}]_i$.

\begin{corollary}\label{BasisAnnG}
  Let $G\in S$ ge a generic form of degree $e$. Consider on $R$ the
  degree reverse lexicographic order.

  If $e=2d+1$ is odd, then the minimal monomial basis of
  $\inn(\ann(G))$ in degree $d+1+i$ consists precisely of the
  monomials in $x^{2i}\mathcal A_{d+1-i}$ for each $0\le i\le d+1$.

  If $e=2d$ is even, then the minimal monomial basis of
  $\inn(\ann(G))$ in degree $d+1+i$ consists precisely of the
  monomials in $\mathcal A_{d+1}\cup x_n\mathcal A_{d}$ for $i=0$ and
  of the monomials in $x^{2i+1}\mathcal A_{d-i}$ for each
  $1\le i\le d$.
\end{corollary}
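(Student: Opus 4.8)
The plan is to use Theorem~\ref{GenericIdealReverseLex}, which tells us that in each degree the initial ideal is generated by the \emph{first} (largest) monomials with respect to the degree reverse lexicographic order, with the exact count $\dim_k R_j - \dim_k R_{e-j}$ in degree $j$. The strategy is therefore twofold: first identify, in each relevant degree, precisely which monomials are the largest ones in the reverse lex order (a combinatorial description), and second pass from the full monomial basis in each degree to the \emph{minimal} generators by subtracting off those monomials that are already divisible by a generator from a lower degree.

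First I would make the reverse lex description concrete. Since $x_n$ is the smallest variable, the largest monomials of degree $j$ are exactly those involving $x_n$ to the smallest possible power; more precisely, the smallest $\dim_k R_i$ monomials of degree $e-i$ are the ones divisible by $x_n^{e-2i}$ (this is exactly the observation already used in the proof of Corollary~\ref{LefschetzProperty}). Dually, the \emph{first} $\dim_k R_d - \dim_k R_{e-d}$ monomials of degree $d$ are the complement of the smallest $\dim_k R_{e-d}$ ones, so they are precisely the monomials of degree $d$ \emph{not} divisible by $x_n^{2d-e+1}$ — equivalently, the monomials whose $x_n$-exponent is at most $2d-e$. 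Writing $j = d+1+i$ and translating $2j-e-1$ into the two parity cases ($e=2d+1$ gives $x_n$-exponent $\le 2i$, and $e=2d$ gives $x_n$-exponent $\le 2i-1$, i.e. $<2i$, forcing exponent $\le 2i-1$) should reproduce the claimed blocks $x_n^{2i}\mathcal A_{d+1-i}$ and $x_n^{2i+1}\mathcal A_{d-i}$ after I account for which monomials are genuinely \emph{minimal} generators rather than multiples of lower-degree ones.

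The key second step is the minimality/reduction argument. A monomial $x^\mu$ in $[\inn(\ann(G))]_{j}$ is a minimal generator iff no monomial obtained by dividing $x^\mu$ by a single variable already lies in the initial ideal. Because the initial ideal is reverse lexicographic (largest monomials in each degree), dividing by $x_n$ — the smallest variable — always yields a monomial that is again among the largest of its degree, hence again in the ideal if the degree is high enough; so the only candidates for new minimal generators in degree $j$ are those whose $x_n$-exponent is exactly the maximal allowed value for that degree. This is what pins the blocks down: in the odd case the generators in degree $d+1+i$ are exactly the monomials with $x_n$-exponent equal to $2i$ and remaining part in $k[x_1,\ldots,x_{n-1}]$ of degree $d+1-i$, i.e. $x_n^{2i}\mathcal A_{d+1-i}$; the even case is analogous with the $i=0$ layer treated separately because there the reverse-lex cutoff permits both the pure $\mathcal A_{d+1}$ monomials and the ones with a single $x_n$, giving $\mathcal A_{d+1}\cup x_n\mathcal A_d$.

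I expect the main obstacle to be the bookkeeping of the parity-dependent cutoffs and the boundary layers: verifying that for each $i$ the $x_n$-exponent threshold is exactly $2i$ (odd case) or $2i+1$ (even case, $i\ge1$), and separately checking the anomalous $i=0$ layer in the even case, requires carefully matching the count $\dim_k R_j - \dim_k R_{e-j}$ against the number of monomials in the proposed blocks and confirming that lowering the $x_n$-exponent by one always stays inside the ideal (so nothing from a lower degree block is re-counted). A clean way to discharge this is to show directly that the union of the proposed blocks over all $i$ generates, in each degree $j$, exactly the reverse-lex initial space described by Theorem~\ref{GenericIdealReverseLex}, and that each listed block is disjoint from $x_n\cdot(\text{previous block})$, which certifies minimality; the generation statement reduces to the identity $\dim_k R_j - \dim_k R_{e-j} = \#\{\text{monomials of degree } j \text{ with } x_n\text{-exponent} \le t_j\}$ for the appropriate threshold $t_j$, an elementary count I would verify case by case.
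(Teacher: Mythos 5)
Your strategy is the same as the paper's: combine Theorem \ref{GenericIdealReverseLex} with the degrevlex fact that the smallest $\dim_k R_{e-j}$ monomials of degree $j$ are exactly those divisible by $x_n^{2j-e}$, so that $[\inn(\ann(G))]_j$ is spanned by the monomials of degree $j$ with $x_n$-exponent at most $2j-e-1$; then read off the minimal generators in degree $j>d+1$ as $J_j\setminus R_1J_{j-1}$, i.e.\ the monomials whose $x_n$-exponent attains the maximal allowed value. However, your exponent thresholds are internally inconsistent and off by one in two places, and in the even case the slip is fatal as written. First, the complement of the set of monomials divisible by $x_n^{2d-e}$ consists of the monomials with $x_n$-exponent at most $2d-e-1$, not $2d-e$; with your bound the odd base case $j=d+1$ would yield $\mathcal A_{d+1}\cup x_n\mathcal A_d$ rather than the correct $\mathcal A_{d+1}$. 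Second, for $e=2d$ and $j=d+1+i$ the cutoff $2j-e-1$ equals $2i+1$, not $2i-1$; with your value the claimed generators $x_n^{2i+1}\mathcal A_{d-i}$ would not even lie in the degree-$j$ piece of the ideal as you describe it. Once these are corrected the argument closes exactly as in the paper: the cutoff in degree $j-1$ is $2j-e-3$, so $R_1J_{j-1}$ consists of all monomials of degree $j$ with $x_n$-exponent at most $2j-e-2$, and the new minimal generators in degree $j$ are precisely the monomials in $x_n^{2j-e-1}\mathcal A_{e-j+1}$, which gives the two stated families upon substituting $j=d+1+i$.
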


\begin{proof}
  $J=\inn(\ann(G))$ is generated in degrees
  $\lfloor e/2\rfloor+1= d+1\le j\le e+1$. By Theorem
  \ref{GenericIdealReverseLex}, $J$ is minimally generated in degree
  $d+1$ by the first $\dim_k R_{d+1}-\dim_kR_{e-d-1}$ monomials of
  degree $d+1$. In the case $e=2d+1$ these consist of the monomials in
  $\mathcal A_{d+1}$ and in the case $e=2d$ these consist of the
  monomials in $\mathcal A_{d+1}\cup x_n \mathcal A_{d}$.
 
  For each $j>d+1$, by Theorem \ref{GenericIdealReverseLex}, $J_{j-1}$
  is generated by all monomials of degree $j-1$ not divisible by
  $x_n^{2(j-1)-e}$, so that $R_1J_{j-1}$ is generated by all monomials
  of degree $j$ not divisible by $x_n^{2j-1-e}$. Therefore, the
  minimal generators of $J$ of degree $j$ are precisely the monomials
  of degree $j$ that are divisible by $x_n^{2j-1-e}$ and not by
  $x_n^{2j-e}$, i.e., the monomials in
  $x_n^{2j-1-e} \mathcal A_{e-j+1}$. Putting $j=d+1+i$ with $i\ge 1$
  we obtain the desired statement.
\end{proof}

\section{A Gr\"obner basis for the annihilator of the complete
  homogeneous polynomial} \label{sec:gb}

In this section we assume that the field $k$ has characteristic
zero. We also fix on $R=k[x_1,\ldots,x_n]$ the degree reserve
lexicographic order induced by $x_1 \succ \cdots \succ x_n$.

For each $e\ge 0$ denote by
$$H_e=\sum_{\underline {i} \in\mathbb N^n,|\underline i|=e}{X_1^{i_1}\cdots X_n^{i_n}} \in S$$
the \emph{complete symmetric polynomial} of degree $e$. In
\cite{boij2022waring} the authors have shown that $R/\ann(H_e)$ is
compressed (\cite[Theorem 2.11]{boij2022waring}). Thus, $R/\ann(H_e)$
and $R/\ann(G)$ have the same Hilbert function, where $G\in S$ is a
generic form of degree $e$. In their paper, the authors also determine
a set of generators of $\ann(H_e)$ (\cite[Theorem
2.12]{boij2022waring}) and show that $R/\ann(H_e)$ has the strong
Lefschetz Property.

In what follows, we focus on determining $\inn(\ann(H_e))$.  We will
show that $\inn(\ann(H_e))=\inn(\ann(G))$, from which it follows that
$\inn(\ann(H_e))$ is reverse lexicographical.  We achieve this by
specifying a Gröbner basis for $\inn(\ann(H_e))$. In order to present
such a basis, we introduce some notation.

\begin{definition}
  Let $\varphi:R\to R$ be the $k$-linear map given by
$$\varphi(x_1^{i_1}\cdots x_n^{i_n})=\frac 1{i_1!\cdots i_n!}x_1^{i_1}\cdots x_n^{i_n}$$ for each $(i_1,\ldots,i_n)\in\mathbb N^n$.
\end{definition}

\begin{definition}
  For each $\underline a=(a_1,\ldots,a_{n-1})\in\mathbb N^{n-1}$
  denote
$$F_{\underline a}=(x_1-x_n)^{a_1}\cdots (x_{n-1}-x_n)^{a_{n-1}}\in R.$$
\end{definition}

For every $d\ge 1$, in \cite[Theorem 2.12]{boij2022waring} the authors
have shown that $F_{\underline a}$, with $|\underline a|=d+1$,
annihilates $H_{2d}$ and $H_{2d+1}$.  We now show the following

\begin{proposition}\label{ElementsInJ}
  If $e=2d$ is even then
  $\{\varphi(x_n^{2i+1} F_{\underline a}):i=0,\ldots,d \text{ and }
  |\underline a|=d-i\}\subset \ann (H_{2d})$ and if $e=2d+1$ is odd
  then
  $\{\varphi(x_n^{2i} F_{\underline a}):i=0,\ldots,d+1 \text{ and }
  |\underline a|=d+1-i\}\subset \ann (H_{2d+1})$.
\end{proposition}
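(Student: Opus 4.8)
The plan is to package all the forms at once through the generating function
$$P(t,X)=\sum_{e\ge 0}H_e\,t^{e}=\prod_{j=1}^{n}\frac{1}{1-tX_j},$$
regarded as a formal power series in $t$ with coefficients in $S$. Since the action is $k$-linear and carries $S_e$ into $S_{e-\deg f}$, for homogeneous $f$ one has $\varphi(f)\circ H_e=[t^{e}]\bigl(\varphi(f)\circ P\bigr)$. Thus it suffices to compute $\varphi(x_n^{k}F_{\underline a})\circ P$ (with $k=2i+1$, $|\underline a|=d-i$ in the even case and $k=2i$, $|\underline a|=d+1-i$ in the odd case) and to show that its $t$-valuation is strictly larger than $e$.

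First I would record two elementary facts. The map $\varphi$ is multiplicative across distinct variables, so on a monomial $\varphi(x^{\underline b})$ acts as the operator $\prod_{j}\frac{1}{b_j!}\partial_{X_j}^{b_j}$, which factors over the variables; and a direct computation gives $\frac{1}{k!}\partial_{X_j}^{k}(1-tX_j)^{-1}=t^{k}(1-tX_j)^{-(k+1)}$. Consequently, applying $\varphi$ of a single monomial to $P$ multiplies the $j$-th factor of $P$ by $t^{b_j}(1-tX_j)^{-b_j}$.

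Next I would expand $F_{\underline a}=\prod_{i=1}^{n-1}(x_i-x_n)^{a_i}$ into monomials, indexing the expansion by the multi-index $\underline l=(l_1,\dots,l_{n-1})$ that records how many factors $-x_n$ are taken from each $(x_i-x_n)^{a_i}$, apply $\varphi$ termwise (valid since distinct variables are involved), and act on $P$. The only coupling between the factors is through the total $x_n$-exponent; splitting the resulting factor $(1-tX_n)^{-|\underline l|}$ as a product over $i$ decouples the sum into a product of $n-1$ one-variable binomial sums. Each such sum collapses by the binomial theorem to $t^{a_i}(X_i-X_n)^{a_i}(1-tX_i)^{-(a_i+1)}(1-tX_n)^{-a_i}$, the numerator $(X_i-X_n)^{a_i}$ arising from $(1-tX_n)-(1-tX_i)=t(X_i-X_n)$.

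Collecting the powers of $t$ then yields $\varphi(x_n^{k}F_{\underline a})\circ P=t^{\,2|\underline a|+k}\,R(t,X)$ with $R$ a genuine power series in $t$ (its denominator is $1+O(t)$), and $2|\underline a|+k=e+1$ in both parity cases. Hence $[t^{e}]$ vanishes, which is the claim. The main obstacle to anticipate is that $\varphi$ is not an algebra homomorphism — it fails to be multiplicative within a single variable — so one cannot simply distribute it over the factors of $x_n^{k}F_{\underline a}$; the shared $x_n$-exponent must be carried through the monomial expansion, and the decisive point is the binomial collapse, which contributes the extra factor $\prod_i t^{a_i}$ that raises the $t$-valuation from $\deg f$ to $e+1$.
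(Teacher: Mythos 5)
Your argument is correct, and it takes a genuinely different route from the paper. The paper proceeds by induction on $e$: it imports from \cite[Theorem 2.12]{boij2022waring} the fact that $\varphi'(F_{\underline a}(x_0-x_n)^{m})$ in $n+1$ variables annihilates the appropriate complete symmetric form $H'$, expands $H'=\sum_t X_0^tH_{\cdot}$, extracts the coefficient of a suitable power of $X_0$ in $f\circ H'$, and kills all but one term by the induction hypothesis, leaving exactly the desired identity. Your proof instead is a direct, self-contained computation: writing $\sum_e H_et^e=\prod_j(1-tX_j)^{-1}$, expanding $x_n^kF_{\underline a}$ into monomials (correctly noting that $\varphi$ is not multiplicative within a single variable, so this expansion is necessary), and observing that the sum decouples into one-variable binomial sums that collapse via $(1-tX_n)-(1-tX_i)=t(X_i-X_n)$, you obtain $\varphi(x_n^kF_{\underline a})\circ P=t^{2|\underline a|+k}\cdot\prod_i(X_i-X_n)^{a_i}\big/\big((1-tX_n)^{k+1+|\underline a|}\prod_i(1-tX_i)^{a_i+1}\big)$, whose $t$-valuation $2|\underline a|+k=e+1$ exceeds $e$ in both parity cases; I have checked the binomial collapse and the valuation count and they are right. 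What your approach buys is independence from the external input: it reproves, rather than cites, the base fact that $F_{\underline a}$ with $|\underline a|=d+1$ annihilates $H_{2d}$ and $H_{2d+1}$ (the case $k=0$), and it avoids the auxiliary variable $x_0$ and the induction entirely, at the cost of a slightly longer explicit computation. One cosmetic point: your remainder series is named $R(t,X)$, which clashes with the paper's use of $R$ for the polynomial ring; rename it before inserting the argument.
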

\begin{proof}
  We do induction on $e$. For $e=1$ the statement amounts to verifying
  that $x_i-x_n$, for $i=1,\ldots,n-1$, as well as $\frac 12 x_n^2$,
  annihilate $h_1=X_1+\cdots+X_n$, which is obvious.

  Suppose now that $e=2d$ is even and that the statement is true for
  all $e_0<2d$. Let $i\in\{0,\ldots,d\}$ and
  $\underline a=(a_1,\ldots,a_{n-1})$ be such that $|a|+i=d$. We need
  to show that $\varphi(x_n^{2i+1}F_{\underline a})\circ H_{2d}=0$.

  By \cite[Theorem 2.12]{boij2022waring}, considering $n+1$ variables
  $x_0,x_1,\ldots,x_n$, we have that
$$
f=\varphi'((x_1-x_n)^{a_1}\cdots
(x_{n-1}-x_n)^{a_{n-1}}(x_0-x_n)^{2i+1})=\varphi'(F_{\underline
  a}(x_0-x_n)^{2i+1}))
$$ annihilates $H'_{2(|a|+2i+1)-1}=H'_{2d+2i+1}$. Here $\varphi'$ stands for the map $\varphi$ defined on $k[x_0,x_1,\ldots,x_n]$ instead of $k[x_1,\ldots,x_n]$ and, for any $e'$, $H'_{e'}$ stands for the complete symmetric polynomial of degree $e'$ in the variables $X_0,X_1,\ldots,X_n$. 
We observe that
\begin{align*}
  f=\varphi'(F_{\underline a}(x_0-x_n)^{2i+1}) &=
                                                 \varphi'\left(\sum_{j=0}^{2i+1}{{2i+1}\choose{j}}(-1)^jx_n^jx_0^{2i+1-j}F_{\underline a} \right) \\
                                               &=\sum_{j=0}^{2i+1}{{2i+1}\choose{j}}\frac{(-1)^jx_0^{2i+1-j}}{(2i+1-j)!}\varphi(x_n^jF_{\underline a})
  \\
                                               &=\sum_{j=0}^{2i+1}b_jx_0^{2i+1-j}\varphi(x_n^jF_{\underline a}).
\end{align*}
Here the $b_j$ represent the suitable coefficients.  The third
equality holds because $x_0$ does not show up in $F_{\underline a}$.
Since
$$
H'_{2d+2i+1}=\sum_{t=0}^{2d+2i+1}X_0^t H_{2d+2i+1-t}
$$
and since
$$
(b_jx_0^{2i+1-j}\varphi(x_n^jF_{\underline a}))\circ (X_0^t
H_{2d+2i+1-t})= b_j'X_0^{t-(2i+1-j)}(\varphi(x_n^jF_{\underline
  a})\circ H_{2d+2i+1-t})
$$
for suitable coefficients $b_j'$, the coefficient of $X_0^{2i+1}$ in
$f\circ H'_{2d+2i+1}$ is
$$
c=\sum b'_{j}(\varphi(x_n^{j}F_{\underline a})\circ H_{2d+2i+1-t})
$$
where the sum ranges over all $j,t$ such that $0\le t\le 2d+2i+1$,
$0\le j\le 2i+1$ and $t-(2i+1-j)=2i+1$. Since we must have
$t\ge 2i+1$, we can write $t=2i+1+r$. Then $j=2i+1-r$ and we can
rewrite the coefficient of $X_0^{2i+1}$ in $f\circ H'_{2d+2i+1}$ as
$$
c=\sum_{r=0}^{2i+1} b'_{2i+1-r}(\varphi(x_n^{2i+1-r}F_{\underline
  a})\circ H_{2d-r})
$$
By the induction hypothesis, whenever $r\ge 1$ we have that
$\varphi(x_n^{2i+1-r}F_{\underline a})\circ H_{2d-r}=0$. Therefore
$$c=b'_{2i+1}\varphi(x_n^{2i+1}F_{\underline a})\circ H_{2d}.$$
Since $f$ annihilates $H'_{2d+2i+1}$, we must have $c=0$, i.e.,
$\varphi(x_n^{2i+1}F_{\underline a})\circ H_{2d}=0$, as we wanted to
show.

Suppose now that $e=2d+1$ is odd and that the statement is true for
all $e_0<2d+1$. In this case we need to show that
$\varphi(x_n^{2i}F_{\underline a})\circ H_{2d+1}=0$. for every
$i\in\{1,\ldots,d+1\}$ and $\underline a=(a_1,\ldots,a_{n-1})$ such
that $|a|+i=d+1$.  The proof follows along the same lines as in the
case of $e$ even, except this time one starts the argument from the
fact that
$$
f=\varphi'((x_1-x_n)^{a_1}\cdots
(x_{n-1}-x_n)^{a_{n-1}}(x_0-x_n)^{2i})=\varphi'(F_{\underline
  a}(x_0-x_n)^{2i}))
$$ annihilates $H'_{2(|a|+2i-1)}=H'_{2d+2i}$. We obtain, as before, that the coefficient of $X_0^{2i-1}$ in $f\circ H'_{2d+2i}$ is
$$
c=\sum_{r=0}^{2i} b'_{2i-r}(\varphi(x_n^{2i-r}F_{\underline a})\circ
H_{2d+1-r})
$$ for suitable nonzero coefficients $b'_j$.  
By the induction hypothesis, whenever $r\ge 1$ we have that
$\varphi(x_n^{2i-r}F_{\underline a})\circ H_{2d+1-r}=0$. Therefore
$$c=b'_{2i}\varphi(x_n^{2i}F_{\underline a})\circ H_{2d+1}.$$
Since $f$ annihilates $H'_{2d+2i}$, we must have $c=0$, i.e.,
$\varphi(x_n^{2i}F_{\underline a})\circ H_{2d+1}=0$.
\end{proof}

We can know specify a minimal Gröbner basis for $\ann(H_{e})$. We
recall that we are considering on $R$ the degree reverse lexicographic
ordering.

\begin{theorem}\label{GrobnerBasisResult}
  For any $e\ge 1$, if $G\in S$ is a generic form of degree $e$ then
$$\inn(\ann(G))=\inn(\ann(H_{e})).$$

Moreover, if $e=2d$ is even then
$$\mathcal G_{2d}=\{\varphi(F_{\underline a}):|\underline a|=d+1\}\cup\{\varphi(x_n^{2i+1} F_{\underline a}):i=0,\ldots,d \text{ and } |\underline a|=d-i\}$$ 
is a minimal Gröbner basis for $\ann(H_{2d})$.  If $e=2d+1$ is odd
then
$$\mathcal G_{2d+1}=\{\varphi(x_n^{2i} F_{\underline
  a}):i=0,\ldots,d+1 \text{ and } |\underline a|=d+1-i\}$$ is a
minimal Gröbner basis for $\ann(H_{2d+1})$.
\end{theorem}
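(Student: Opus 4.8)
The plan is to prove the two assertions at once, by first pinning down the leading terms of the elements of $\mathcal G_e$ and then comparing Hilbert functions. The first thing I would record is that the $k$-linear map $\varphi$ rescales each monomial by a nonzero factor, so it preserves the support of a polynomial and hence its leading monomial; thus $\inn(\varphi(P))=\inn(P)$ for every $P\in R$. Since $x_n$ is the smallest variable in the degree reverse lexicographic order, $\inn((x_j-x_n)^{a_j})=x_j^{a_j}$, and because $\inn$ is multiplicative this gives $\inn(x_n^k F_{\underline a})=x_n^k x_1^{a_1}\cdots x_{n-1}^{a_{n-1}}$. Consequently, as $\underline a$ ranges over all compositions with $|\underline a|$ fixed, the leading monomials of the corresponding elements of $\mathcal G_e$ range bijectively over $x_n^k\mathcal A_{|\underline a|}$.

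Reading off these leading monomials family by family, I would check that they are exactly the monomials listed in Corollary \ref{BasisAnnG} as the minimal monomial basis of $\inn(\ann(G))$: in the odd case the family $\varphi(x_n^{2i}F_{\underline a})$ with $|\underline a|=d+1-i$ contributes $x_n^{2i}\mathcal A_{d+1-i}$ for $0\le i\le d+1$; in the even case $\varphi(F_{\underline a})$ with $|\underline a|=d+1$ contributes $\mathcal A_{d+1}$, $\varphi(x_n F_{\underline a})$ with $|\underline a|=d$ contributes $x_n\mathcal A_d$, and $\varphi(x_n^{2i+1}F_{\underline a})$ with $|\underline a|=d-i$ contributes $x_n^{2i+1}\mathcal A_{d-i}$ for $1\le i\le d$. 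Since the $x_n$-exponent of each leading monomial is exactly the displayed power of $x_n$ while the remaining factor lies in $k[x_1,\dots,x_{n-1}]$, distinct elements of $\mathcal G_e$ have distinct leading monomials, and these are precisely the minimal generators produced by Corollary \ref{BasisAnnG}. Hence $\langle\inn(\mathcal G_e)\rangle=\inn(\ann(G))$.

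Next I would establish the containment $\inn(\ann(G))\subseteq\inn(\ann(H_e))$. Proposition \ref{ElementsInJ} shows that all of $\mathcal G_{2d+1}$ lies in $\ann(H_{2d+1})$ and that the subfamily $\{\varphi(x_n^{2i+1}F_{\underline a})\}$ of $\mathcal G_{2d}$ lies in $\ann(H_{2d})$; the remaining subfamily $\{\varphi(F_{\underline a}):|\underline a|=d+1\}$ of $\mathcal G_{2d}$ lies in $\ann(H_{2d})$ by the result of \cite{boij2022waring} recalled before Proposition \ref{ElementsInJ}. Thus $\mathcal G_e\subseteq\ann(H_e)$, so $\langle\inn(\mathcal G_e)\rangle\subseteq\inn(\ann(H_e))$, and combined with the previous paragraph this gives $\inn(\ann(G))\subseteq\inn(\ann(H_e))$.

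Finally I would upgrade this containment to an equality by a Hilbert function count, which is the decisive step. Passing to an initial ideal preserves the Hilbert function, and $R/\ann(G)$ and $R/\ann(H_e)$ have the same Hilbert function because both are compressed of socle degree $e$; hence $\inn(\ann(G))$ and $\inn(\ann(H_e))$ have the same dimension in every degree. A containment of homogeneous ideals that is an equality of dimensions in each degree is an equality, so $\inn(\ann(G))=\inn(\ann(H_e))$. In particular $\langle\inn(\mathcal G_e)\rangle=\inn(\ann(H_e))$ with $\mathcal G_e\subseteq\ann(H_e)$, so $\mathcal G_e$ is a Gröbner basis of $\ann(H_e)$, and it is minimal since its leading monomials are the minimal generators of the initial ideal and so none divides another. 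I expect the main obstacle to be the bookkeeping in the leading-term computation, in particular matching the two families in degree $d+1$ in the even case against Corollary \ref{BasisAnnG}; once the leading terms are correctly identified, the passage from containment to equality through the shared compressed Hilbert function is clean.
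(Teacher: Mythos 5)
Your proposal is correct and follows essentially the same route as the paper's own proof: show $\mathcal G_e\subseteq\ann(H_e)$ via Proposition \ref{ElementsInJ} and \cite[Theorem 2.12]{boij2022waring}, identify the leading monomials of $\mathcal G_e$ with the minimal generators of $\inn(\ann(G))$ from Corollary \ref{BasisAnnG}, and close the resulting inclusion $\inn(\ann(G))\subseteq\inn(\ann(H_e))$ by the equality of Hilbert functions coming from both algebras being compressed. The only difference is that you spell out the leading-term computation ($\varphi$ preserving supports, $\inn(x_n^k F_{\underline a})=x_n^kx_1^{a_1}\cdots x_{n-1}^{a_{n-1}}$) that the paper asserts without detail.
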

\begin{proof}
  Suppose $e=2d+1$ is odd. By Proposition \ref{ElementsInJ} and
  \cite[Theorem 2.12]{boij2022waring}, $\mathcal G_{2d+1}$ is a subset
  of $\ann(H_{e})$. The initial terms of $\mathcal G_{2d+1}$ form the
  set
  $\{x_n^{2i}x_1^{a_1}\cdots x_{n-1}^{a_{n-1}}:i=0,\ldots,d+1 \text{
    and } a_1+\cdots +a_{n-1}=d+1-i \}$, which, by Corollary
  \ref{BasisAnnG}, is exactly the minimal monomial generating set of
  $\inn(\ann(H_{2d+1}))$. Since $R/\inn(\ann(H_{2d+1}))$ and
  $R/\inn(\ann(G))$ have the same Hilbert function, we conclude that
  $\inn(\ann(G))=\inn(\ann(H_{2d+1}))$ and that $\mathcal G_{2d+1}$ is
  a minimal Gröbner basis for $\ann(H_{2d+1})$.  For $e=2d$ even, the
  result follows in the same way.
\end{proof}

We finish this section with a couple of observations concerning the
reducedness of the Gröbner basis presented in Theorem
\ref{GrobnerBasisResult} and concerning the problem of determining a
minimal generating set for $\ann(H_e)$.

\begin{remark}
  It is easy to see that the Gröbner basis $\mathcal G_{2d+1}$ for
  $\ann(H_{2d+1})$ presented in Theorem \ref{GrobnerBasisResult} is
  reduced. However, the Gröbner basis $\mathcal G_{2d}$ for
  $\ann(H_{2d})$ is not in reduced form. This is because, given
  $(a_1,\ldots,a_{n-1})\in\mathbb N^{n-1}$ with
  $a_1+\cdots+a_{n-1}=d+1$, we have that
  \begin{align*}
    \varphi(F_{(a_1,\ldots,a_{n-1})})&=\varphi((x_1-x_n))^{a_1}\cdots (x_{n-1}-x_n)^{a_{n-1}}) \\
                                     &= \frac{x_1^{a_1}\cdots x_{n-1}^{a_{n-1}}}{a_1!\cdots a_{n-1}!}-x_n\left(\sum_{i=1}^{n-1}a_i\frac{x_1^{a_1}\cdots x_{i}^{a_{i}-1}\cdots x_{n-1}^{a_{n-1}}}{a_1!\cdots (a_i-1)!\cdots a_{n-1}!}\right)+\cdots
  \end{align*}
  has in its support monomials divisible by the leading monomials of
  the Gröbner basis elements
  $\varphi(x_nF_{(a_1,\ldots,a_i-1,\ldots,a_{n-1})})$ with
  $a_i\neq 0$. As such, replacing in $\mathcal G_{2d}$ each
  $\varphi(F_{(a_1,\ldots,a_{n-1})})$ with $a_1+\cdots+a_{n-1}=d+1$ by
$$\varphi(F_{(a_1,\ldots,a_{n-1})})+\sum_{i=1}^{n-1}a_i\varphi(x_nF_{(a_1,\ldots,a_i-1,\ldots,a_{n-1})})$$
we obtain a reduced Gröbner basis of $\ann(H_{2d})$.
\end{remark}

\begin{remark}
  According to \cite[Theorem 2.12]{boij2022waring}, for every
  $d\ge 0$, $\ann(H_{2d})$ is equigenerated in degree $d+1$, while
  $\ann(H_{2d+1})$ is generated in degrees $d+1$ and $d+2$. Therefore,
  a minimal set of generators of $\ann(H_{2d})$ is given by the
  elements of its minimal Gröbner basis $\mathcal G_{2d}$ that have
  degree $d+1$, i.e., by
  $\{\varphi(F_{\underline a}):|\underline a|=d+1\}\cup
  \{\varphi(x_nF_{\underline a}):|\underline a|=d\}.$ Similarly, the
  elements in $\{\varphi(F_{\underline a}):|\underline a|=d+1\}$ are
  minimal generators of $\ann(H_{2d+1})$. To determine a minimal set
  of generators of $\ann(H_{2d+1})$ it remains to check which elements
  in $\{\varphi(x_n^2 F_{\underline a}):|\underline a|=d\}$ are also
  minimal generators. We do not have an answer to this. However, our
  computational experiments point to the following conjecture.
\end{remark}

\begin{conjecture}
  If $d$ is odd, then $\ann(H_{2d+1})$ has no minimal generators in
  degree $d+2$, so that
  $\ann(H_{2d+1})=(\varphi(F_{\underline a}):|\underline a|=d+1)$.

  If $d$ is even, then $\ann(H_{2d+1})$ has exactly one minimal
  generator in degree $d+2$. In fact, we have
  $\ann(H_{2d+1})=(\varphi(F_{\underline a}):|\underline
  a|=d+1)+(\varphi(x_n^2(x_{n-1}-x_n)^{d}))$.
\end{conjecture}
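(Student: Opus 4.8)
The plan is to recast the conjecture as the computation of a single Betti number. Writing $A=R/\ann(H_{2d+1})$, the number of minimal generators of $\ann(H_{2d+1})$ in degree $d+2$ is $\mu_{d+2}=\dim_k\operatorname{Tor}^R_1(A,k)_{d+2}$, and since $\ann(H_{2d+1})$ has no generators below degree $d+1$, its degree-$(d+1)$ part is exactly $V:=\langle\varphi(F_{\underline a}):|\underline a|=d+1\rangle$ (these are $\dim_k R_{d+1}-\dim_k R_d$ linearly independent elements filling the whole graded piece), so that
\[
\mu_{d+2}=\dim_k[\ann(H_{2d+1})]_{d+2}-\dim_k\bigl(R_1\cdot V\bigr).
\]
The first term is read off from compressedness, $\dim_k[\ann(H_{2d+1})]_{d+2}=\dim_k R_{d+2}-\dim_k R_{d-1}$, and by Theorem \ref{GrobnerBasisResult} the degree-$(d+2)$ part is $R_1\cdot V$ together with the Gröbner elements $\varphi(x_n^2 F_{\underline a})$, $|\underline a|=d$. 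Hence the whole problem reduces to determining which of these $\varphi(x_n^2 F_{\underline a})$ fail to lie in $R_1\cdot V$, i.e.\ to the rank of the multiplication map $R_1\otimes V\to R_{d+2}$.

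First I would remove the map $\varphi$ from the picture. Since $\varphi$ is a graded linear automorphism of $R$ with $x_j\cdot\varphi(x^\gamma)=(\gamma_j+1)\,\varphi(x^{\gamma+e_j})$, the conjugate operator $D_j:=\varphi^{-1}\circ(x_j\cdot)\circ\varphi$ is divided-power multiplication, $D_j(x^\gamma)=(\gamma_j+1)x^{\gamma+e_j}$. Applying $\varphi^{-1}$ turns the map above into $R_1\otimes W\to R_{d+2}$, $x_j\otimes w\mapsto D_j(w)$, where $W=\varphi^{-1}V=\langle F_{\underline a}=y^{\underline a}:|\underline a|=d+1\rangle$ with $y_i:=x_i-x_n$; thus $W$ is the degree-$(d+1)$ piece of the ideal $(y_1,\dots,y_{n-1})^{d+1}$ inside $R=k[y_1,\dots,y_{n-1},x_n]$. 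The task is now the purely combinatorial one of computing the rank of divided-power multiplication out of $W$ and identifying the cokernel.

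From here the strategy is two steps. In step (a) I would imitate the relation exhibited in the remark preceding Theorem \ref{GrobnerBasisResult} for the even case: expanding $\varphi(x_n^2 F_{\underline a})$ and repeatedly trading a factor of $x_n$ against the generators $x_n\cdot\varphi(F_{\underline b})\in R_1\cdot V$, I expect every $\varphi(x_n^2 F_{\underline a})$ with $|\underline a|=d$ to be congruent modulo $R_1\cdot V$ to a scalar multiple of the single ``tail'' element $T:=\varphi(x_n^2(x_{n-1}-x_n)^d)\in k[x_{n-1},x_n]$, collapsing the cokernel to dimension at most one. Step (b) is then the decisive point: to decide whether $T\in R_1\cdot V$. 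I would do this by a two-variable computation in $x_{n-1},x_n$ — equivalently, testing the relevant linear combination against the complete symmetric form in those two variables, whose apolar ideal is a binary complete intersection — and show that the resulting coefficient is an alternating sum that vanishes exactly when $d$ is odd (so $T\in R_1\cdot V$ and $\mu_{d+2}=0$) and is nonzero when $d$ is even (so $\mu_{d+2}=1$ with $T$ as the asserted generator).

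The main obstacle will be the exact rank computation underlying step (a). Because $\varphi$ does not commute with multiplication, the map $R_1\otimes V\to R_{d+2}$ is not a map of symmetric powers, so its linear syzygies are not given by the Koszul relations alone, and controlling $\dim_k(R_1\cdot V)$ requires understanding these extra relations precisely enough to show that every candidate collapses onto the one-dimensional tail. I expect the cleanest route to be an induction on $e$ in the spirit of Proposition \ref{ElementsInJ}: adjoin a variable $x_0$, use the recursion $H'_{e'}=\sum_t X_0^t H_{e'-t}$, and extract a single coefficient, which should simultaneously produce the reductions to the tail in step (a) and evaluate the surviving scalar in step (b), with the signs coming from the expansion of $(x_i-x_n)$ supplying the dependence on the parity of $d$.
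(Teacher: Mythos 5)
First, be aware that the paper does not prove this statement: it is presented as a conjecture supported by computational experiments, and the remark immediately preceding it says explicitly ``We do not have an answer to this.'' So your proposal cannot be measured against a proof in the paper; it has to stand on its own, and at present it does not.

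Your opening reduction is correct, but it is essentially the observation the paper already makes in that remark: since $R/\ann(H_{2d+1})$ is compressed and $\ann(H_{2d+1})$ is generated in degrees $d+1$ and $d+2$, the conjecture is equivalent to computing the number $\mu_{d+2}$ of minimal generators in degree $d+2$, i.e.\ the dimension of the span of $\{\varphi(x_n^2F_{\underline a}):|\underline a|=d\}$ modulo $R_1\cdot V$, and exhibiting a representative when that dimension is $1$. The genuine gap is that both of your decisive steps are announced rather than carried out. Step (a) --- that every $\varphi(x_n^2F_{\underline a})$ is congruent modulo $R_1\cdot V$ to a scalar multiple of $T=\varphi(x_n^2(x_{n-1}-x_n)^d)$ --- is the hard content: it would already give $\mu_{d+2}\le 1$, which is not known, and the analogy with the even-case reduction in the paper's remark does not transfer directly, because there the reducing elements $\varphi(x_nF_{\underline b})$ were themselves Gr\"obner basis elements of the same degree, whereas here you must reduce modulo the full space $R_1\cdot V$, whose elements $x_j\varphi(F_{\underline b})$ are not of the form $\varphi(x_j\cdot(\text{--}))$, so the ``trading a factor of $x_n$'' move you invoke is not available as stated. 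Step (b) rests on an unevaluated alternating sum whose parity-dependent vanishing is precisely the assertion of the conjecture; saying it ``should'' vanish for $d$ odd restates the claim rather than proving it. Until (a) and (b) are executed --- for instance via the $x_0$-induction you sketch, with the surviving coefficient computed in closed form --- this is a plausible and reasonable plan of attack on an open problem, not a proof.
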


\section{Total Betti numbers for a class of strongly stable
  ideals} \label{sec:betti}

Recall that a monomial ideal $J$ of $R$ is \emph{strongly stable} if
for every monomial $m\in J$, every variable $x_i$ that divides $m$ and
every $j<i$ the monomial $(x_j/x_i)m$ also belongs to $J$. In
particular, every weakly reverse lexicographic ideal is strongly
stable.

In this section we determine the total Betti numbers $\beta_p$ of
strongly stable monomial ideals generated in degrees $\geq d$ that
contain every monomial of degree $d$ not divisible by $x_n$. We then
show that $\beta_p$ equals the total Betti numbers of the initial
ideal of an ideal generated by generic forms in degrees $\geq d$ with
a sufficient number of generators of degree $d$, and likewise, that
$\beta_p$ equals the total Betti numbers of the initial ideal of the
annihilator of a generic form.

In the following statement, $\mathfrak m$ stands for
$(x_1,\ldots,x_n)$, the maximal homogeneous ideal of
$R=k[x_1,\ldots,x_n]$.

\begin{theorem}\label{SameBetti}
  Let $d\ge 2$. Let $J$ be a strongly stable monomial ideal of
  $R=k[x_1,\ldots,x_{n}]$ generated in degrees $\ge d$ and containing
  every monomial of degree $d$ not divisible by $x_n$. Suppose $R/J$
  is Artinian.  Then
$$\beta_p(R/J)=\beta_p(R/\mathfrak m^{d})=\sum_{i=1}^{n}{{{d+i-2}\choose {d-1}}{{i-1}\choose {p-1}}} \quad \text{for all }p\ge 1. $$
In particular, $J$ is minimally generated by $\dim_k R_d$ monomials.
\end{theorem}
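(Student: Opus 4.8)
The plan is to compute the total Betti numbers through the Eliahou--Kervaire resolution, which is valid for any stable monomial ideal and in particular for our strongly stable $J$. For a stable ideal the total Betti numbers depend only on the statistics of the minimal generators: if $g_i$ denotes the number of minimal monomial generators $u$ of $J$ whose largest variable is $x_i$ (that is, $\max\{j:x_j\mid u\}=i$), then $\beta_p(R/J)=\sum_{i=1}^n g_i\binom{i-1}{p-1}$. Since the matrix $\bigl(\binom{i-1}{p-1}\bigr)_{p,i}$ is unitriangular, matching the Betti numbers of $R/\mathfrak m^d$ is equivalent to showing that $g_i=\binom{d+i-2}{d-1}$ for every $i$, this being the number of degree-$d$ monomials in $x_1,\dots,x_i$ divisible by $x_i$, i.e.\ the corresponding count for $\mathfrak m^d$. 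Thus the whole theorem reduces to computing the $g_i$, and summing them via the hockey-stick identity $\sum_{i=1}^n\binom{d+i-2}{d-1}=\binom{d+n-1}{d}=\dim_k R_d$ will give the final clause.

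For $i\le n-1$ I would argue directly. A minimal generator $u$ with $\max(u)=i<n$ is not divisible by $x_n$; if it had degree $>d$ it would possess a degree-$d$ divisor, again free of $x_n$, which lies in $J$ by hypothesis, contradicting minimality. Hence such generators have degree exactly $d$, and conversely every degree-$d$ monomial free of $x_n$ is a minimal generator because $J$ starts in degree $d$. Thus the minimal generators with $\max=i\le n-1$ are precisely the degree-$d$ monomials in $x_1,\dots,x_i$ divisible by $x_i$, of which there are $\binom{d+i-2}{d-1}$. This step uses neither strong stability nor the Artinian hypothesis.

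The remaining and decisive step is to count $g_n$, the number of minimal generators divisible by $x_n$; note that $\beta_n(R/J)=g_n$, so this count is forced by the conclusion. Here I would use strong stability to set up a bijection between these generators and the monomials representing a basis of $(J:x_n)/J=(0:_{R/J}x_n)$. The map $u\mapsto u/x_n$ sends a minimal generator divisible by $x_n$ to a monomial outside $J$ (by minimality) that is annihilated by $x_n$ modulo $J$. Conversely, if a monomial $w\notin J$ satisfies $x_n w\in J$, then $x_n w$ must be a minimal generator: if not, some variable quotient $x_n w/x_k$ lies in $J$, and whether $x_k=x_n$ or $x_k=x_j$ with $j<n$, one application of strong stability (in the latter case with $i=n$, replacing $x_n$ by $x_j$) returns $w\in J$, a contradiction. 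Verifying that these two maps are mutually inverse is, I expect, the main obstacle of the proof.

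Finally I would evaluate $\dim_k (J:x_n)/J$ using the four-term exact sequence
\[
  0\longrightarrow (J:x_n)/J\longrightarrow R/J\xrightarrow{\;\cdot x_n\;} R/J\longrightarrow R/(J+(x_n))\longrightarrow 0
\]
of finite-length modules, whose alternating sum of dimensions yields $\dim_k (J:x_n)/J=\dim_k R/(J+(x_n))$. Setting $x_n=0$ sends the degree-$d$ generators free of $x_n$ onto a generating set of $\mathfrak m'^{\,d}$ in $k[x_1,\dots,x_{n-1}]$ and all generators divisible by $x_n$ to $0$, so $R/(J+(x_n))\cong k[x_1,\dots,x_{n-1}]/\mathfrak m'^{\,d}$ with $\mathfrak m'=(x_1,\dots,x_{n-1})$; its length is $\sum_{j=0}^{d-1}\dim_k k[x_1,\dots,x_{n-1}]_j=\binom{d+n-2}{d-1}$. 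Hence $g_n=\binom{d+n-2}{d-1}$, which completes the determination of all the $g_i$ and thereby establishes both the Betti number formula and the count $\dim_k R_d$ of minimal generators.
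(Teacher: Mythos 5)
Your proof is correct, and its decisive step goes by a genuinely different route than the paper's. Both arguments end with the Eliahou--Kervaire formula and both count the generators whose largest variable is $x_i$ for $i<n$ in the same elementary way; the difference is in how the $\binom{d+n-2}{d-1}$ generators divisible by $x_n$ are counted. The paper never isolates them directly: it proves the identity $\HF(R/J,i)-\HF(R/J,i+1)=r_{i+1}$ for $i\ge d$ (via the bijection that multiplication by $x_n$ induces between monomial bases of $R_i/J_i$ and $R_{i+1}/R_1J_i$), telescopes this over all degrees up to the socle degree to get the total count $\dim_k R_d$ of minimal generators, and obtains the $x_n$-divisible count by subtraction. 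You instead exhibit an explicit bijection $u\mapsto u/x_n$ between the $x_n$-divisible minimal generators and the monomial basis of $(J:x_n)/J=(0:_{R/J}x_n)$, and evaluate its dimension in one stroke from the multiplication-by-$x_n$ exact sequence as $\dim_k R/(J+(x_n))=\dim_k k[x_1,\dots,x_{n-1}]/\mathfrak m'^{\,d}=\binom{d+n-2}{d-1}$. The two arguments use the same ingredients in the same places --- strong stability is exactly what makes your ``$x_nw$ is a minimal generator'' step work and what makes the paper's map injective, and the Artinian hypothesis is what terminates the paper's telescoping sum and what makes your length count legitimate --- but yours localizes the whole computation in a single degree-free exact sequence rather than a degree-by-degree Hilbert function recursion, which is arguably cleaner; the paper's version has the mild advantage of producing the per-degree generator counts $r_{i+1}=\HF(R/J,i)-\HF(R/J,i+1)$ along the way, which is the kind of refined information exploited later in Remark~\ref{remarkGradedBetti}. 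One small remark: the step you flag as the likely main obstacle (that the two maps are mutually inverse) is immediate once well-definedness is established, and well-definedness is precisely what you already proved.
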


\begin{proof}
  We start by proving the last statement, i.e., that $J$ is minimally
  generated by $\dim_k R_d$ monomials.  For each $i\ge 0$ let $r_i$
  denote the number of minimal generators of $J$ of degree $i$. We
  claim that $$\HF(R/J,i)-\HF(R/J,i+1)=r_{i+1}$$ for every $i\ge
  d$. Since
  \begin{align*}
    \HF(R/J,i+1)&=\dim_k R_{i+1}-\dim_kJ_{i+1}\\
                &=\dim_k R_{i+1}-\dim_k (R_1J_{i})-r_{i+1}\\
                &=\dim_k (R_{i+1}/R_1J_{i})-r_{i+1},
  \end{align*}
  the claim is equivalent to showing that
  $$\dim_k (R_{i}/J_{i})=\dim_k (R_{i+1}/R_1J_{i}).$$

  For this, we observe that multiplication by $x_n$ induces a
  bijection between the monomial basis of $R_{i}/J_{i}$ and the
  monomial basis of $R_{i+1}/R_1J_{i}$. The surjectiveness comes from
  the fact that $J_i$ contains every monomial of degree $i$ not
  divisible by $x_n$, thus forcing every element of $R_{i+1}/R_1J_{i}$
  to be a multiple of $x_n$. To show injectiveness, suppose that $m$
  is a monomial of degree $i$ such that $x_n m = x_t m'$ where
  $1\le t\le n$ and $m'$ is a monomial in $J_i$. If $t=n$ then
  $m=m'\in J_i$ and we are done. If $t<n$ then $m'$ must be divisible
  by $x_n$ and we get $m=m'(x_t/x_n)$, which, since $J$ is strongly
  stable, belongs to $J_i$. This proves the claim.

  Let $s$ be the socle degree of $R/J$. Since $J$ is generated in
  degrees greater or equal to $d$, we have
  \begin{align*}
    \dim_k R_d-r_d &=\HF(R/J,d) \\
                   &= \sum_{i=d}^{s} \HF(R/J,i)-\HF(R/J,i+1) \\
                   &= \sum_{i=d}^{s} r_{i+1}. 
  \end{align*}
  We conclude that $r_d+\cdots+r_{s+1}=\dim_k R_d$, i.e., $J$ is
  minimally generated by $\dim_k R_d$ monomials.

  Consequently, $J$ is minimally generated by all
  $\dim_k R_{d}-\dim_k R_{d-1}$ monomials of degree $d$ that are not
  divisible by $x_n$ and by $\dim_k R_{d-1}$ monomials divisible by
  $x_n$.  More precisely, for each $1\le i<n$, $J$ contains exactly
  $a_i=\dim_k k[x_1,\ldots,x_{i}]_{d-1}={{d+i-2}\choose {d-1}}$
  minimal generators that are divisible by $x_i$ and not by any $x_j$
  with $j>i$. Also $J$ contains exactly
  $a_n=\dim_k R_{d-1}={{d+n-2}\choose {d-1}}$ minimal generators
  divisible by $x_n$.

  As $J$ is strongly stable, the minimal free resolution of $R/J$ is
  given by the Eliahou-Kervaire resolution. According to
  \cite[Corollary 28.12]{peeva2010graded}, the total Betti numbers
  $\beta_p(R/J)$ are given by the formula
$$\beta_p(R/J)=\sum_{i=1}^{n}{a_i{{i-1}\choose {p-1}}}=\sum_{i=1}^{n}{{{d+i-2}\choose {d-1}}{{i-1}\choose {p-1}}}$$
for all $p\ge 1$. Since $\mathfrak m^d$ satisfies the hypothesis of
the theorem, the total Betti numbers of $R/\mathfrak m^d$ also agree
with this formula.
\end{proof}

As a corollary, we obtain that if $I$ is an ideal of $R$ generated by
generic forms of degrees $\ge d$, of which a sufficient number of them
have degree exactly $d$, then adding more generic forms of degrees
$\ge d$ to $I$ will not change the total Betti numbers of
$R/\inn(I)$. In particular, it will not change the number of minimal
generators of $\inn(I)$.

\begin{corollary}\label{NumberGens}
  Let $d\ge 2$ and $n\ge 3$. Let $I$ be an ideal of
  $R=k[x_1,\ldots,x_{n}]$ generated by generic forms of degrees
  greater than or equal to $d$, of which at least
  $\dim_k R_d-\dim_k R_{d-1}$ have degree $d$. Then
$$\beta_p(R/\inn(I))=\beta_p(R/\mathfrak m^{d})=\sum_{i=1}^{n}{{{d+i-2}\choose {d-1}}{{i-1}\choose {p-1}}} \quad \text{for all }p\ge 1. $$
In particular, $\inn(I)$ is minimally generated by $\dim_k R_d$
monomials.
\end{corollary}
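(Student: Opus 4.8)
The plan is to deduce the statement from Theorem \ref{SameBetti} applied to the monomial ideal $J=\inn(I)$, so that the whole argument reduces to checking that $\inn(I)$ meets the four hypotheses of that theorem: that it is strongly stable, generated in degrees $\ge d$, contains every monomial of degree $d$ not divisible by $x_n$, and that $R/\inn(I)$ is Artinian. Once these are verified, Theorem \ref{SameBetti} yields $\beta_p(R/\inn(I))=\beta_p(R/\mathfrak m^d)=\sum_{i=1}^{n}\binom{d+i-2}{d-1}\binom{i-1}{p-1}$ together with the statement on the number of minimal generators, and nothing further is needed.

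I would dispose of the two easy hypotheses first. Since $I$ is generated in degrees $\ge d$, we have $I_j=0$ and hence $\inn(I)_j=0$ for $j<d$, so $\inn(I)$ is generated in degrees $\ge d$. For the degree $d$ part I would argue exactly as in Theorem \ref{GenericIdealReverseLex}: as $I$ has no generators below degree $d$, the space $I_d$ is the span $V$ of the degree $d$ generators, a generic subspace of $R_d$ of dimension at least $\dim_k R_d-\dim_k R_{d-1}$. For a generic subspace the leading monomials are precisely the largest ones in the chosen order, and in the degree reverse lexicographic order the largest $\dim_k R_d-\dim_k R_{d-1}=\dim_k k[x_1,\ldots,x_{n-1}]_d=\binom{n+d-2}{d}$ monomials of degree $d$ are exactly those not divisible by $x_n$. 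Hence $\inn(I)_d$ contains every monomial of degree $d$ not divisible by $x_n$, which is the third hypothesis.

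For strong stability I would use that, since the generators of $I$ have generic coefficients, $I$ is in generic coordinates, so $\inn(I)$ may be identified with a generic initial ideal; working in characteristic zero, every generic initial ideal is Borel-fixed and therefore strongly stable. For the Artinian condition, which is equivalent to $R/I$ being Artinian, I would invoke the hypothesis $n\ge 3$. The number of degree $d$ generators is at least $\binom{n+d-2}{d}$, and $\binom{n+d-2}{d}\ge\binom{n}{2}\ge n$ precisely when $n\ge 3$ and $d\ge 2$ (the first inequality holds because $\binom{n+d-2}{d}$ is nondecreasing in $d$ for $n\ge 2$, with equality $\binom{n}{2}$ at $d=2$). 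Thus $I$ contains at least $n$ generic forms of degree $d$; any $n$ generic forms of positive degree form a homogeneous system of parameters, so the ideal they generate is $\mathfrak m$-primary and $R/I$ is Artinian.

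I expect the main obstacle to be the strong stability of $\inn(I)$, since that is the only point at which genericity must be promoted from ``generic coefficients'' to ``generic coordinates'' and where the characteristic zero assumption genuinely enters; the Moreno-Soc\'ias circle of ideas shows this cannot be replaced by the stronger claim that $\inn(I)$ is reverse lexicographic, so one must argue at the level of strong stability. The Artinian condition is the other hypothesis-dependent step, but there the inequality $\binom{n+d-2}{d}\ge n$ combined with the regular-sequence property of generic forms settles it directly. Finally, the closing remark that adding further generic forms of degrees $\ge d$ does not change the total Betti numbers follows at once, since enlarging $I$ only enlarges $\inn(I)$ while preserving all four hypotheses, so Theorem \ref{SameBetti} continues to apply with the same numerical output.
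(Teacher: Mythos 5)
Your proposal is correct and follows essentially the same route as the paper: reduce to Theorem \ref{SameBetti} by checking that $\inn(I)$ is strongly stable (via identification with the generic initial ideal), contains all degree-$d$ monomials not divisible by $x_n$ (via genericity of the degree-$d$ piece), and is Artinian (via $\dim_k R_d-\dim_k R_{d-1}\ge n$ for $d\ge 2$, $n\ge 3$). Your write-up is somewhat more detailed than the paper's, which simply asserts these three facts, but the underlying argument is identical.
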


\begin{proof}
  Since $I$ is generated by generic forms, $\inn(I)$ equals the
  generic initial ideal of $I$, hence $\inn(I)$ is a strongly stable
  ideal by \cite[Theorem 28.4]{peeva2010graded}.  Also, as at least
  $\dim_k R_d-\dim_k R_{d-1}$ of the generators of $I$ have degree
  $d$, $\inn(I)$ contains all the monomials of degree $d$ that are not
  divisible by $x_n$. Since $d\ge 2$ and $n\ge 3$, we have
  $\dim_k R_d-\dim_k R_{d-1}\ge n$, thus $R/I$ is Artinian. The
  statement now follows from Theorem \ref{SameBetti}.
\end{proof}

\begin{example}
  Let $I$ be an ideal in $R=k[x_1,x_2,x_3]$ generated by
  $\dim_{k} R_{11} - \dim_{k} R_{10}=12$ generic forms of degree
  $11$. Let $I'$ be $I$ plus the ideal generated by a generic form of
  degree $12$, and let $I''$ be $I$ plus the ideal generated by the
  non-generic form $(x_1+x_2+x_3)^5 x_3^7$. All three ideals will have
  socle in degree $13$, and the non-trivial parts of the Betti tables
  equals

$$
\begin{matrix}
  & 1 & 2 & 3\\
  \text{total:} & 78 & 143 & 66\\
  10: & 12 & 11 & .\\
  11: & 11 & 22 & 11\\
  12: & 22 & 44 & 22\\
  13: & 33 & 66 & 33
\end{matrix} \hspace{1.5cm}
\begin{matrix}
  & 1 & 2 & 3\\
  \text{total:} & 78 & 143 & 66\\
  10: & 12 & 11 & .\\
  11: & 12 & 24 & 12\\
  12: & 24 & 48 & 24\\
  13: & 30 & 60 & 30
\end{matrix} \hspace{1.5cm}
\begin{matrix}
  & 1 & 2 & 3\\
  \text{total:} & 76 & 140 & 65\\
  10:  & 12 & 11 & .\\
  11:  & 11 & 22 & 11\\
  12:  & 22 & 47 & 24\\
  13: & 30 & 60 & 30
\end{matrix}$$ for $\inn(I)$, $\inn(I')$, and $\inn(I'')$,
respectively. In particular, the total Betti numbers for $\inn(I)$ and
$\inn(I')$ agree, while those for $\inn(I'')$ differ due to the fact
that $I''$ is not generated solely by generic forms.
\end{example}

\begin{remark}\label{remarkGradedBetti}
  Let $I=\ann(G)$, where $G\in S$ is a generic form of degree
  $e\ge 1$. According to Theorem \ref{GenericIdealReverseLex},
  $\inn(I)$ is a reverse lexicographic, hence strongly stable, ideal
  that is generated in degrees $\lfloor e/2\rfloor+1= d+1\le q\le e+1$
  and contains all monomials of degree $d+1$ not divisible by
  $x_n$. Hence, $\inn(I)$ satisfies the requirements of Theorem
  \ref{SameBetti}, so that
  $\beta_p(R/\inn(I))=\beta_p(R/\mathfrak m^{d+1})$ for all $p\ge 0$.

  However, by Corollary \ref{BasisAnnG}, we know exactly the basis
  elements of $\inn(I)$, so we can actually specify all the graded
  Betti numbers of $R/\inn(I)$.

  Since $\inn(I)$ is a strongly stable ideal, by \cite[Corollary
  28.12]{peeva2010graded} we have that
$$\beta_{p,p+q}(R/\inn(I))=\sum_{i=1}^{n}{a_{i,q}}{{i-1}\choose {p-1}}$$
for all $p\ge 1$ and $q\ge 0$, where $a_{i,q}$ denotes the number of
minimal generators of $\inn(I)$ that have degree $q$ and that are
divisible by $x_i$ but not by any $x_j$ with $j>i$.

In degree $d+1$ we have that $\inn(I)$ contains all monomials not
divisible by $x_n$, so that
$a_{i,d+1}=\dim_k k[x_1,\ldots,x_{i}]_{d}={{d+i-1}\choose {d}}$ for
all $i<n$ and $a_{i,q}=0$ for all $i<n$, $q>d+1$. From Corollary
\ref{BasisAnnG} we get
$a_{n,q}=\dim_k k[x_1,\ldots,x_{n-1}]_{e+1-q}={{e-q+n-1}\choose
  {n-2}}$ for all $q>d+1$. We also see that $a_{n,d+1}=0$ if $e=2d+1$
is odd and that
$a_{n,d+1}=\dim_k k[x_1,\ldots,x_{n-1}]_{d}={{d+n-2}\choose {d}}$ if
$e=2d$ is even.

We conclude that
$$\beta_{p,p+q}(R/\inn(I))={{{e-q+n-1}\choose {n-2}}{{n-1}\choose {p-1}}}$$
for all $p\ge 1$ and $q>d+1$, whereas
$$\beta_{p,p+d+1}(R/\inn(I))=\left\{
\begin{array}{ll}
  \sum_{i=1}^{n-1}{{{d+i-1}\choose {d}}{{i-1}\choose {p-1}}}   &  \text{if } e=2d+1 \text{ is odd}\\
  {{d+n-2}\choose {d}}+\sum_{i=1}^{n-1}{{{d+i-1}\choose {d}}{{i-1}\choose {p-1}}}   &  \text{if } e=2d \text{ is even}
\end{array}
\right.$$ for all $p\ge 1$.
\end{remark}

Our computational experiments also give evidence for the following.

\begin{conjecture}\label{InitialInclusion} 
  Let $G\in S$ is a generic form of degree $e\ge 1$. Let $I$ be an
  ideal of $R$ generated by generic forms of degrees greater than or
  equal to $d=\lfloor e/2\rfloor+1$, of which at least
  $\dim_k R_d-\dim_k R_{d-1}$ have degree $d$. Then
$$\inn(\ann(G))\subseteq \inn(I).$$
\end{conjecture}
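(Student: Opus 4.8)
The plan is to prove the containment one graded piece at a time, using that $\inn(\ann(G))$ is reverse lexicographic (Theorem \ref{GenericIdealReverseLex}). Combining Theorem \ref{GenericIdealReverseLex} with the computation in Corollary \ref{BasisAnnG}, the degree-$(d+i)$ part $[\inn(\ann(G))]_{d+i}$ is exactly the set of monomials of degree $d+i$ whose $x_n$-exponent does not exceed the top exponent $a_i$ occurring among the listed generators ($a_i=2i$ when $e$ is odd, $a_i=2i+1$ when $e$ is even). This set is strongly stable, and its unique Borel-minimal monomial is $x_n^{a_i}x_{n-1}^{(d+i)-a_i}$. Since $\inn(I)$ is the generic initial ideal of $I$, it is strongly stable (Corollary \ref{NumberGens}); hence, if this single monomial lies in $\inn(I)$, then by closing under the Borel moves $x_{n-1}\mapsto x_j$ (and $x_n\mapsto x_j$) all of $[\inn(\ann(G))]_{d+i}$ does. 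The entire conjecture therefore reduces to the family of single membership statements $x_n^{a_i}x_{n-1}^{(d+i)-a_i}\in \inn(I)$, one for each admissible $i$.

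For small $x_n$-exponent these memberships are automatic: $\inn(I)$ contains every degree-$d$ monomial not divisible by $x_n$ (Theorem \ref{SameBetti}), and multiplying by $R_i$ shows that $\inn(I)\supseteq R_i\cdot\mathcal A_d$ already contains every monomial of degree $d+i$ with $x_n$-exponent at most $i$. The remaining memberships concern the genuinely large monomials, those of $x_n$-exponent strictly between $i+1$ and $a_i$. I would first secure the necessary dimension inequality $\dim_k \ann(G)_j\le \dim_k I_j$ for all $j$, equivalently that the compressed Hilbert function of $R/\ann(G)$ dominates the Fröberg Hilbert function of $R/I$ pointwise, since without it there is simply not enough room in $\inn(I)$. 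This comparison should follow from the extremal characterizations of the two Hilbert functions (compressed is maximal for a Gorenstein algebra of socle degree $e$, while a generic ideal realizes the coefficientwise-minimal Fröberg series), at least once $I$ carries enough forms of degree $d$.

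The crux, and the reason the statement is only conjectural, is the positional content of the large memberships: even granting the dimension inequality, one must show that $\inn(I)$ contains these specific top monomials, not merely enough monomials of that degree. The clean route would be to prove that $\inn(I)$ is itself reverse lexicographic in the degrees $d+i$, for then the dimension inequality upgrades immediately to containment of reverse-lex initial segments. But asserting that the generic initial ideal of an ideal of generic forms is reverse lexicographic is exactly the circle of ideas around the open Moreno-Socías conjecture, so I expect this to be the real obstacle. A semicontinuity/degeneration argument directly linking $\inn(I)$ and $\inn(\ann(G))$ is tempting but seems blocked, because $R/I$ and $R/\ann(G)$ have different Hilbert functions and hence are not joined by any flat family of constant Hilbert function. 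In the ``many generators'' regime of the hypothesis one might instead compute the Fröberg series of $R/I$ explicitly and argue degree by degree that the forced monomials, together with the exact total dimension, leave no admissible strongly stable configuration that omits the top monomials.

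Finally I would treat the bottom degree $j=d$ with care, since the reduction already exposes a dichotomy there. For odd $e$ one has $a_0=0$, so $[\inn(\ann(G))]_d$ is precisely the monomials not divisible by $x_n$, which Theorem \ref{SameBetti} already places in $\inn(I)$; the bottom degree is free. For even $e$ one has $a_0=1$, so $[\inn(\ann(G))]_d$ additionally contains $x_n\mathcal A_{d-1}$, and the membership $x_nx_{n-1}^{d-1}\in\inn(I)$ forces $\dim_k I_d\ge \dim_k R_d-\dim_k R_{e-d}$, which (as $e-d=d-2$) is strictly larger than $\dim_k R_d-\dim_k R_{d-1}$. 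I would therefore check at the outset that the hypothesis supplies enough degree-$d$ forms in the even case, since the minimal count guaranteeing only the Artinian property does not by itself control this bottom piece.
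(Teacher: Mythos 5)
This statement is labelled a conjecture in the paper and is supported there only by computational evidence; the paper contains no proof, so there is nothing to compare your argument against. Your proposal is, by your own account, a reduction rather than a proof. The reduction itself is sound: since $[\inn(\ann(G))]_{d+i}$ is the set of monomials of degree $d+i$ with $x_n$-exponent at most $a_i$ (with $a_i=2i$ for $e$ odd, $a_i=2i+1$ for $e$ even), this set is Borel-generated by the single monomial $x_n^{a_i}x_{n-1}^{d+i-a_i}$, and strong stability of $\inn(I)=\mathrm{gin}(I)$ does reduce the conjecture to these finitely many memberships; the cases of $x_n$-exponent at most $i$ do follow from $R_i\cdot\mathcal A_d\subseteq\inn(I)$. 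But the remaining memberships are precisely the open content, and you do not supply an argument for them, so the gap you name (control of the top monomials, tied to the Moreno-Soc\'{\i}as circle of problems) is a genuine one and the proposal does not establish the conjecture.

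Your final observation deserves to be stated more forcefully, because it is not merely a point to ``check'': as written, the hypothesis appears insufficient when $e=2d'$ is even. Take $I$ generated by exactly $\dim_k R_d-\dim_k R_{d-1}$ generic forms of degree $d$ and nothing else. Then $\dim_k I_d=\dim_k R_d-\dim_k R_{d-1}$, whereas by Theorem \ref{GenericIdealReverseLex} one has $\dim_k[\inn(\ann(G))]_d=\dim_k R_d-\dim_k R_{e-d}=\dim_k R_d-\dim_k R_{d-2}$, which is strictly larger. Hence the claimed containment already fails in degree $d$ by a dimension count, and no argument can repair it without strengthening the hypothesis to at least $\dim_k R_d-\dim_k R_{e-d}$ forms of degree $d$ (the two bounds agree exactly when $e$ is odd). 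This is a concrete and useful correction to the statement, but it also means your proposal, which keeps the stated hypothesis, cannot succeed as formulated in the even case.
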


If $I$ is as in Conjecture \ref{InitialInclusion} then, by Corollary
\ref{NumberGens}, we know that the minimal number of generators of
$\inn(I)$ is $\dim_k R_d$. However, we do not know how many minimal
generators $\inn(I)$ has in each degree. If that could be determined,
then, as in Remark \ref{remarkGradedBetti}, we would know the whole
Betti table of $R/\inn(I)$ and, as a consequence, one could possibly
determine the Hilbert function of $R/I$ and check if it matches with
the Hilbert function predicted by Fr\"oberg's conjecture.

\section*{Acknowledgements}
Experiments in Macaulay2 \cite{M2} have been crucial to derive the
results presented in the paper. The first author was partially funded
by the Swedish Research Council VR2024-04853, the second author was
supported by the Sverker Lerheden Foundation, and third author was
partially supported by the Swedish Research Council grant
VR2022-04009.
\appendix


\begin{thebibliography}{10}

\bibitem{BOIJ1999111}
Mats Boij.
\newblock Betti numbers of compressed level algebras.
\newblock {\em Journal of Pure and Applied Algebra}, 134(2):111--131, 1999.

\bibitem{boij2023rate}
Mats Boij, Emanuela De~Negri, Alessandro De~Stefani, and Maria~Evelina Rossi.
\newblock On the rate of generic gorenstein $ k $-algebras.
\newblock {\em arXiv preprint arXiv:2304.14842}, 2023.

\bibitem{boij2022waring}
Mats Boij, Juan Migliore, Rosa~M. Mir{\'o}-Roig, and Uwe Nagel.
\newblock Waring and cactus ranks and strong lefschetz property for
  annihilators of symmetric forms.
\newblock {\em Algebra \& Number Theory}, 16(1):155--178, 2022.

\bibitem{Froberg}
Ralf Fr{\"o}berg.
\newblock An inequality for {Hilbert} series of graded algebras.
\newblock {\em Mathematica Scandinavica}, 56:117--144, 1985.

\bibitem{M2}
Daniel~R. Grayson and Michael~E. Stillman.
\newblock Macaulay2, a software system for research in algebraic geometry.
\newblock Available at \url{http://www2.macaulay2.com}.

\bibitem{iarrobino1984compressed}
Anthony Iarrobino.
\newblock Compressed algebras: Artin algebras having given socle degrees and
  maximal length.
\newblock {\em Transactions of the American Mathematical Society},
  285(1):337--378, 1984.

\bibitem{iarrobino1999power}
Anthony Iarrobino and Vassil Kanev.
\newblock {\em Power sums, Gorenstein algebras, and determinantal loci}.
\newblock Springer Science \& Business Media, 1999.

\bibitem{MORENOSOCIAS2003263}
Guillermo Moreno-Soc\'ias.
\newblock Degrevlex gröbner bases of generic complete intersections.
\newblock {\em Journal of Pure and Applied Algebra}, 180(3):263--283, 2003.

\bibitem{PARDUE2010579}
Keith Pardue.
\newblock Generic sequences of polynomials.
\newblock {\em Journal of Algebra}, 324(4):579--590, 2010.

\bibitem{peeva2010graded}
Irena Peeva.
\newblock {\em Graded syzygies}, volume~14.
\newblock Springer Science \& Business Media, 2010.

\end{thebibliography}
\end{document}